
\documentclass[reqno]{amsart}
\pdfoutput1

\usepackage{amssymb,color,hyperref,mathrsfs,stmaryrd}
\usepackage{amsmath}
\usepackage{mathtools}

\usepackage[usenames,dvipsnames]{xcolor}
\usepackage[curve,matrix,arrow]{xy}



\usepackage{amsmath,amsfonts,amsthm,amssymb,amscd, verbatim, graphicx}

\usepackage{pinlabel}
\usepackage{lscape}
\usepackage{tabularx}
\usepackage{latexsym}
\usepackage{hyperref}






\newtheorem*{thma}{Theorem A}
\newtheorem*{thmb}{Theorem B}

\newtheorem*{lemma}{Lemma~1}




\theoremstyle{definition}




\theoremstyle{remark}


\numberwithin{equation}{section}

\newcommand{\Aut}{\operatorname{Aut}}

\newcommand{\Id}{\operatorname{Id}}

\newcommand{\Syl}{\operatorname{Syl}}

\newcommand{\Inn}{\operatorname{Inn}}

\renewcommand{\Gamma}{\varGamma}
\renewcommand{\epsilon}{\varepsilon}

\renewcommand{\leq}{\leqslant}


\newcommand{\F}{\mathcal{F}}

\newcommand{\E}{\mathcal{E}}

\renewcommand{\phi}{\varphi}





\begin{document}


\title{Centralizers of normal subgroups and the $Z^*$-theorem}
 

\author{E. Henke}
\address{Institute of Mathematics, University of Aberdeen, U.K.}
\email{ellen.henke@abdn.ac.uk}
\author{J. Semeraro}
\address{Heilbronn Institute for Mathematical Research, Department of Mathematics, University of Bristol, U.K.}
\email{js13525@bristol.ac.uk}



\begin{abstract}
Glauberman's $Z^*$-theorem and analogous statements for odd primes show that, for any prime $p$ and any finite group $G$ with Sylow $p$-subgroup $S$, the centre of $G/O_{p^\prime}(G)$ is determined by the fusion system $\F_S(G)$. Building on these results we show a statement that seems a priori more general: For any normal subgroup $H$ of $G$ with $O_{p^\prime}(H)=1$, the centralizer $C_S(H)$ is expressed in terms of the fusion system $\F_S(H)$ and its normal subsystem induced by $H$.
\end{abstract}

\maketitle

\textbf{Keywords:} Finite groups; fusion systems; Glauberman's $Z^*$-theorem.

\bigskip


Throughout $p$ is a prime. Glauberman's $Z^*$-theorem \cite{Glauberman:1966} and its generalization to odd primes, which is shown using the classification of finite simple groups (see \cite{Xiao:1991} and \cite{Guralnick/Robinson:1993}), can be reformulated as follows:

\begin{thma}
 Let $G$ be a finite group with $O_{p^\prime}(G)=1$, and $S\in\Syl_p(G)$. Then $Z(G)=Z(\F_S(G))$. 
\end{thma}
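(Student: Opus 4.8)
The plan is to establish the two inclusions $Z(G)\leq Z(\F_S(G))$ and $Z(\F_S(G))\leq Z(G)$ separately. As a preliminary I would record that $Z(G)\leq S$: since $O_{p'}(G)=1$, any element of $Z(G)$ of order prime to $p$ would generate a nontrivial normal $p'$-subgroup of $G$, which is impossible, so $Z(G)$ is a $p$-group and is contained in every Sylow $p$-subgroup, in particular in $S$. This is the only way the hypothesis $O_{p'}(G)=1$ enters the easy direction, and it is exactly what makes the statement $Z(G)=Z(\F_S(G))$ sensible, as $Z(\F_S(G))\leq Z(S)\leq S$ by definition.

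For the inclusion $Z(G)\leq Z(\F_S(G))$ I would take $z\in Z(G)$. Then $z\in Z(S)$, and for every $P\leq S$ and every morphism $c_g\in\Hom_{\F_S(G)}(P,S)$ with $z\in P$ we have $c_g(z)=z^g=z$ because $z$ is central in $G$. Thus $z$ is fixed by every morphism of $\F_S(G)$, which is precisely the condition for $z\in Z(\F_S(G))$. This direction is elementary and uses none of the deep input.

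The reverse inclusion $Z(\F_S(G))\leq Z(G)$ is the substantial direction, and I expect it to be the main obstacle. The key idea is to translate membership in $Z(\F_S(G))$ into an \emph{isolation} property of the element. Let $z\in Z(\F_S(G))$, so that $\langle z\rangle$ is a central subgroup of $\F_S(G)$ and, in particular, $\langle z\rangle\leq Z(S)$. By the defining extension property of a central subgroup, applied to $P=\langle z\rangle$ and the morphism $c_g\colon\langle z\rangle\to S$ itself, one obtains $c_g|_{\langle z\rangle}=\Id$ for every $g\in G$ with $z^g\in S$; that is, $z^g=z$ whenever $z^g\in S$. Hence $z$ is not $G$-conjugate to any element of $S$ other than itself, and a fortiori to no other element of $C_S(z)$, so $z$ is isolated in the sense required by the $Z^*$-theorem.

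Finally I would invoke Glauberman's $Z^*$-theorem for $p=2$ together with its generalization to odd primes, in the form that an isolated $p$-element of $S$ lies in $Z^*(G)$, the full preimage in $G$ of $Z(G/O_{p'}(G))$. Since $O_{p'}(G)=1$ we have $Z^*(G)=Z(G)$, and therefore $z\in Z(G)$, completing the reverse inclusion. The delicate points in this last step are, first, that it depends on the classification of finite simple groups for odd $p$, and second, that the cited results must be applied to $p$-elements of arbitrary order and not merely to elements of order $p$. I would handle the latter by quoting the generalized $Z^*$-theorem in its form for arbitrary $p$-elements; the reduction from this to the order-$p$ (respectively involution) case, while standard, is itself nontrivial and is precisely where the work of \cite{Xiao:1991} and \cite{Guralnick/Robinson:1993} is needed.
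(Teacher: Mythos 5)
Your argument is correct in outline and is essentially what the paper has in mind: Theorem~A is stated there without proof, as a reformulation of Glauberman's $Z^*$-theorem and its odd-prime analogues, so the only content to supply is exactly the translation you give. The preliminary observation that $Z(G)\leq S$, the easy inclusion $Z(G)\leq Z(\F_S(G))$, and the extraction of the isolation property ($z^g=z$ whenever $z^g\in S$) from centrality of $\langle z\rangle$ in $\F_S(G)$ are all fine, and the last of these is the right bridge to the group-theoretic form of the $Z^*$-theorem quoted in the paper.

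The one step you have flagged but not actually secured is the passage from elements of order $p$ to $p$-elements of arbitrary order. The paper's ``common formulation,'' and the results of Glauberman, Xiao and Guralnick--Robinson as usually quoted, concern elements of order $p$ (involutions when $p=2$), whereas your $z\in Z(\F_S(G))$ need not have order $p$. Arbitrary-order forms do exist in the literature, so citing one is legitimate; but your closing remark that this reduction ``is precisely where the work of Xiao and Guralnick--Robinson is needed'' misplaces the difficulty: their contribution is the CFSG-dependent order-$p$ case for odd $p$, while the upgrade to arbitrary $p$-power order is a separate, elementary induction that you should either cite precisely or carry out. It can be done as follows. Every order-$p$ element of $Z(\F_S(G))$ is isolated, hence central by the order-$p$ theorem, so $\Omega_1(Z(\F_S(G)))\leq Z(G)$. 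Factoring out a central subgroup $N$ of order $p$ preserves the hypothesis $O_{p'}(G)=1$ and sends $Z(\F_S(G))/N$ into $Z(\F_{S/N}(G/N))$, so induction on $|G|$ yields $[G,Z(\F_S(G))]\leq Z(G)$. Then for any $p'$-element $g$ one has $[Z(\F_S(G)),g,g]=1$, whence $[Z(\F_S(G)),g]=1$ by coprime action on a $p$-group; since $Z(\F_S(G))$ is now normal in $G$ and centralized by $S$, its centralizer is a normal subgroup containing $S$ and all $p'$-elements, hence equals $G$. With this (or an accurate citation of an arbitrary-order form of the theorem) in place, your proof is complete.
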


We refer the reader here to \cite{Aschbacher/Kessar/Oliver:2011a} for basic definitions and results regarding fusion systems; see in particular Definitions~I.4.1 and I.4.3 for the definition of central subgroups and the centre $Z(\F)$. A more common formulation of the $Z^*$-theorem states that, assuming the hypothesis of Theorem~A, we have $t\in Z(G)$ if and only if $t^G\cap S=\{t\}$ for every element $t\in S$ of order $p$. Given a normal subgroup $H$ of a finite group $G$, a Sylow $p$-subgroup $S\in\Syl_p(G)$, and an element $t\in S$ of order $p$, one can apply the $Z^*$-theorem with $H\langle t\rangle$ in place of $G$ to obtain the following corollary: Provided $O_{p^\prime}(H)=1$, we have $t^H\cap S=\{t\}$ if and only $t\in C_S(H)$. In this short note, we use Theorem~A to give a less obvious characterization of $C_S(H)$.

\smallskip

Given a saturated fusion system $\F$ on a finite $p$-group $S$ and a normal subsystem $\E$ of $\F$ on $T\leq S$,  Aschbacher \cite[(6.7)(1)]{Aschbacher:2011} showed that the set of subgroups $X$ of $C_S(T)$ with $\E\subseteq C_\F(X)$ has a largest member $C_S(\E)$. He furthermore constructed a normal subsystem $C_\F(\E)$ on $C_S(\E)$, the centralizer of $\E$ in $\F$; see \cite[Chapter~6]{Aschbacher:2011}. Note that $C_S(\E)$ depends not only on $S$ and $\E$ but also on the fusion system $\F$ in which both $S$ and $\E$ are contained. 

\smallskip

The definition of $C_S(\E)$ generalizes the definition of $Z(\F)$ since $C_S(\F)=Z(\F)$. Moreover, for every normal subgroup $H$ of a finite group $G$ with Sylow $p$-subgroup $S$, $\F_{S\cap H}(H)$ is a normal subsystem of $\F_S(G)$ by \cite[I.6.2]{Aschbacher/Kessar/Oliver:2011a}. Thus, the following theorem, which we prove later on, can be seen as a generalization of Theorem~A.

\begin{thmb}
Let $G$ be a finite group and let $S$ be a Sylow $p$-subgroup of $G$. Let $H \unlhd G$ with $O_{p^\prime}(H)=1$. Then $C_S(\F_{S\cap H}(H))=C_S(H)$.
\end{thmb}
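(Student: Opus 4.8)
The plan is to prove the two inclusions separately. Write $\F=\F_S(G)$, $\E=\F_{S\cap H}(H)$, and $T=S\cap H\in\Syl_p(H)$. For $C_S(H)\le C_S(\E)$ I would check directly that $X_0:=C_S(H)$ satisfies the defining property of $C_S(\E)$. Since $T\le H$ we have $X_0\le C_S(T)$, so $X_0$ is an admissible candidate. Every morphism of $\E$ is a conjugation map $c_h$ (with $h\in H$) between subgroups $P,Q\le T$; as $h$ centralizes $X_0=C_S(H)$, the map $c_h$ extends to $PX_0\to QX_0$ acting as the identity on $X_0$, and hence lies in $C_\F(X_0)$. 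Thus $\E\subseteq C_\F(X_0)$, and by the maximality in \cite[(6.7)(1)]{Aschbacher:2011} we get $C_S(H)\le C_S(\E)$.

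For the reverse inclusion set $X:=C_S(\E)$ and $K:=HX$. I would first make Theorem~A available for $K$: from $[X,T]=1$ one gets $TX\in\Syl_p(K)$ (an index count using $X\cap T=X\cap H$), and $O_{p^\prime}(K)=1$, since $O_{p^\prime}(K)\cap H\le O_{p^\prime}(H)=1$ forces $O_{p^\prime}(K)$ to embed into the $p$-group $K/H$. A direct check gives $X\cap H=X\cap T=Z(\E)$, and applying Theorem~A to $H$ identifies this with $Z(H)$, which is a $p$-group because $O_{p^\prime}(H)=1$; as $C_S(H)\cap H=Z(H)$ as well, the groups $X$ and $C_S(H)$ already agree inside $H$. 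The whole content is therefore to control the part of $X$ lying outside $H$: it remains to show $X\le C_K(H)$, for then $X\le C_G(H)\cap S=C_S(H)$.

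To prove $X\le C_K(H)$ I would invoke the element-wise reformulation of the $Z^*$-theorem recorded in the introduction, now inside $K$ for the normal subgroup $H$: an order-$p$ element of $TX$ centralizes $H$ precisely when it is not $H$-fused in $TX$ to any other element. The task is thus to upgrade the fusion-theoretic hypothesis $\E\subseteq C_\F(X)$ to the assertion that $X$ is not moved by $H$. Unwinding the definition of $C_\F(X)$, the hypothesis says that every conjugation $c_h$ between subgroups of $T$ coincides on its domain with conjugation by some $g\in C_G(X)$. Using Alperin's fusion theorem to generate $\E$ from such maps, and using $O_{p^\prime}(H)=1$ to control $H$ in terms of its $p$-local structure, I would deduce that $H$ centralizes $X$, i.e.\ $[H,X]=1$.

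The hard part will be precisely this last implication, namely passing from ``$X$ centralizes the $p$-fusion of $H$'' to ``$X$ centralizes $H$''; this is exactly the content supplied by the $Z^*$-theorem (and, for odd $p$, by the classification). That the hypothesis $O_{p^\prime}(H)=1$ cannot be dropped here is already visible for $p=2$ in $G=S_3\times C_2$ with $H=S_3$: then $T=\langle(1\,2)\rangle$, the system $\E$ is trivial and $C_S(\E)=S$, whereas $C_S(H)$ is proper, the gap being created by $O_{2^\prime}(H)=A_3\ne 1$.
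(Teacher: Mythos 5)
Your first inclusion ($C_S(H)\leq C_S(\E)$) is exactly the paper's argument and is fine, as is the observation that $O_{p'}$ of $H\langle t\rangle$ (or of $HX$) vanishes so that Theorem~A becomes available. But the reverse inclusion, which is the entire content of the theorem, is not proved: you reduce it to showing that an element $t\in X=C_S(\E)$ of order $p$ is not $H$-fused to any other element of a Sylow $p$-subgroup of $H\langle t\rangle$, and then assert that this "is exactly the content supplied by the $Z^*$-theorem." It is not. The $Z^*$-theorem converts the statement "$t\in Z(\F_{T\langle t\rangle}(H\langle t\rangle))$" into "$t\in Z(H\langle t\rangle)$"; what is missing is the preceding, purely local, step of converting the hypothesis $\E\subseteq C_\F(\langle t\rangle)$ into $t\in Z(\F_0)$ where $\F_0=\F_{T\langle t\rangle}(H\langle t\rangle)$. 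The obstruction is that the hypothesis only provides, for each $\E$-morphism, an extension fixing $t$ that lives in $\F=\F_S(G)$ --- i.e.\ is realized by conjugation in the possibly much larger group $G$ --- whereas to control the fusion of $t$ in $H\langle t\rangle$ you need such extensions to be realized in $H$. The paper's proof spends all of its effort here: it runs Alperin's fusion theorem in $\F_0$ over centric radical fully normalized $P\leq T\langle t\rangle$, reduces to $p'$-elements $\phi$ of $\Aut_{\F_0}(P)=\Aut_{S_0}(P)O^p(\Aut_H(P))$, extends $\phi|_{P\cap T}$ to some $\psi\in\Aut_\F(P)$ fixing $t$, and then invokes the nontrivial Lemma~1 (that a $p'$-automorphism of $P$ in $\Aut_G(P)$ which stabilizes $P\cap H$, acts on it as an $H$-automorphism, and satisfies $[P,\psi]\leq P\cap H$ must itself lie in $\Aut_H(P)$, provided $P\cap H$ is $\E$-centric) to conclude $\psi\in\Aut_H(P)$; only then does radicality of $P$ force $\phi\psi^{-1}\in\Inn(P)$ and hence $t\phi=t$. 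Your sentence "using Alperin's fusion theorem to generate $\E$ from such maps, and using $O_{p'}(H)=1$ to control $H$ in terms of its $p$-local structure, I would deduce that $H$ centralizes $X$" is a restatement of the goal, not an argument, and no citation of the $Z^*$-theorem closes it.

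Two smaller points. First, your element-wise route via the corollary in the introduction only treats elements of $X$ of order $p$, and "every order-$p$ element of $X$ centralizes $H$" does not immediately yield $[X,H]=1$; the paper avoids this by applying the $Z(\F)$ form of Theorem~A to an arbitrary $t\in C_S(\E)$. Second, the side claim $X\cap T=Z(\E)$ needs justification (the extensions witnessing $\E\subseteq C_\F(X)$ live in $\F$, not a priori in $\E$), though nothing in your argument depends on it. Your closing example with $G=S_3\times C_2$ showing that $O_{p'}(H)=1$ cannot be dropped is correct and a nice check.
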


In the statement of Theorem~B it is understood that $C_S(\F_{S\cap H}(H))$ is formed inside of $\F_S(G)$. The result says in other words that, under the hypothesis of Theorem~B, for any $X\leq S$ with $\F_{S\cap H}(H)\subseteq C_{\F_S(G)}(X)$, we have $X\leq C_S(H)$. This is not true if one drops the assumption that $H$ is normal in $G$ as the following example shows: Let $G:=G_1\times G_2$ with $G_1\cong G_2\cong S_3$. Set $p=3$, $S=O_3(G)$, $S_i:=O_3(G_i)$ and let $R$ be a subgroup of $G$ of order $2$ which acts fixed point freely on $S$. Set $H:=S_1\rtimes R$. Then $S_1=S\cap H\in\Syl_3(H)$ and $\F_{S_1}(H)=\F_{S_1}(G_1)\subseteq C_{\F_S(G)}(S_2)$ as $S_2=C_S(G_1)$. However, $S_2\not\leq C_S(H)$ by the choice of $R$.   

\smallskip

Theorem~B was conjectured by the second author of this paper in \cite{Semeraro:2014}. Our proof of Theorem~B builds on Theorem~A and the reduction uses only elementary group theoretical results. Essential is the following lemma, whose proof is self-contained apart from using the conjugacy of Hall-subgroups in solvable groups.

\begin{lemma}
Let $G$ be a finite group with Sylow $p$-subgroup $S$ and a normal subgroup $H$. Let $P\leq S$ such that $P\cap H$ is centric in $\F_{S\cap H}(H)$. Then for every $p^\prime$-element $\phi\in\Aut_G(P)$ with $[P,\phi]\leq P\cap H$ and $\phi|_{P\cap H}\in\Aut_H(P\cap H)$, we have $\phi\in\Aut_H(P)$.
\end{lemma}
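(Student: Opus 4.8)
The plan is to build, out of the given data, an element of $N_H(P):=N_G(P)\cap H$ that realizes $\phi$, via a chain of reductions culminating in a Hall–conjugacy argument. Write $Q:=P\cap H$ and pick $g\in N_G(P)$ with $c_g|_P=\phi$. Replacing $g$ by its $p'$-part I may assume $g$ is a $p'$-element: its $p$-part maps to the $p$-part of $\phi$ in $\Aut_G(P)=N_G(P)/C_G(P)$, which is trivial since $\phi$ is a $p'$-element, so the $p$-part lies in $C_G(P)$ and the $p'$-part still realizes $\phi$. Set $F:=C_P(g)$. As $\langle g\rangle$ is a $p'$-group acting on the $p$-group $P$ with $[P,g]=[P,\phi]\leq Q$, coprime action gives $P=C_P(g)[P,g]=FQ$. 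Finally, since $\phi|_Q\in\Aut_H(Q)$, I choose $h_0\in N_H(Q)$ with $c_{h_0}|_Q=\phi|_Q$ and put $c:=gh_0^{-1}$, so that $c\in C_G(Q)$.

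First I would record a commutator computation. For $y\in F$ one has $[g,y]=1$, whence $[c,y]=h_0(h_0^y)^{-1}\in H$; since also $c,c^y\in C_G(Q)$ we get $[c,y]\in C_G(Q)$, so $[c,y]\in K$, where $K:=C_H(Q)$. Combining this with $[c,Q]=1$ and the factorization $P=FQ$ yields $[c,P]\leq K$, and therefore $P^c\leq PK$.

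The main step is where centricity of $Q$ enters, and I expect it to be the crux. Centricity of $Q$ in $\F_{S\cap H}(H)$ means precisely $K=C_H(Q)=Z(Q)\times D$ with $D:=O_{p'}(C_H(Q))$; thus $Z(Q)=K\cap P$ is the Sylow $p$-subgroup of $K$, and $PK=PD$ with $D$ a normal $p'$-subgroup of $PD$ (note $P\leq N_G(Q)$ normalizes $C_H(Q)$ and hence $D$). Consequently $P$ and $P^c$ are two complements to $D$ in $PD$, both of order $|PD|_p=|P|$. The difficulty is exactly to have set up this ambient group $PD$ correctly; once one has it, the conjugacy of Hall subgroups (equivalently, of complements to the normal $p'$-subgroup $D$) provides an element $d\in D$ with $P^{cd}=P$. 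Writing $c':=cd$, I obtain $c'\in N_G(P)\cap C_G(Q)$ with $[c',P]\leq K\cap P=Z(Q)\leq Q$.

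To finish, note that $c_{c'}|_P$ stabilizes the chain $1\leq Q\leq P$: it is trivial on $Q$ because $c'\in C_G(Q)$, and trivial on $P/Q$ because $[c',P]\leq Q$. Hence $c_{c'}|_P$ is a $p$-element of $\Aut_G(P)$, the stability group of a chain in a finite $p$-group being a $p$-group. On the other hand $h_0':=c'^{-1}g=d^{-1}h_0\in N_G(P)\cap H=N_H(P)$, so $c_{h_0'}|_P\in\Aut_H(P)$, and $\phi=c_g|_P$ differs from $c_{c'}|_P$ by the factor $c_{h_0'}|_P\in\Aut_H(P)$. Since $\Aut_H(P)\unlhd\Aut_G(P)$, the images of $\phi$ and of the $p$-element $c_{c'}|_P$ in $\Aut_G(P)/\Aut_H(P)$ coincide; but $\phi$ is a $p'$-element, so this common image is simultaneously a $p$- and a $p'$-element, hence trivial. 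Therefore $\phi\in\Aut_H(P)$, as required.
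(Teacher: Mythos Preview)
Your proof is correct. The paper does not give its own argument for this lemma; it simply cites \cite[Proposition~3.1]{Henke:2013}, noting only that the proof there is ``self-contained apart from using the conjugacy of Hall-subgroups in solvable groups.'' Your argument matches that description exactly: after the coprime-action factorization $P=C_P(g)\,Q$ and the structural consequence $C_H(Q)=Z(Q)\times D$ of centricity, the crux is a Hall/Sylow conjugacy inside the solvable group $PD$, followed by a chain-stability step to force the residual factor $c_{c'}|_P$ to be a $p$-element. One small remark: where you appeal to conjugacy of Hall subgroups, plain Sylow conjugacy already suffices, since $P$ and $P^{c}$ are Sylow $p$-subgroups of $PD$; this is if anything slightly lighter than what the paper's description suggests is needed.
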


\begin{proof}
 This is \cite[Proposition~3.1]{Henke:2013}.
\end{proof}

\begin{proof}[Proof of Theorem~B]
We assume the hypothesis of Theorem~B. Furthermore, we set $\F:=\F_S(G)$, $T:=S\cap H$ and $\E:=\F_T(H)$. If a homomorphism $\phi$ between subgroups $A$ and $B$ of $T$ is induced by conjugation with an element $h\in H$, then $\phi$ extends to $c_h:AC_S(H)\rightarrow BC_S(H)$ and $c_h$ restricts to the identity on $C_S(H)$. Thus $\E \subseteq C_\F(C_S(H))$, so by the definition of $C_S(\E)$, we have $C_S(H) \leq C_S(\E)$. To prove the converse inclusion, choose $t \in C_S(\E)$. Define: $$G_0:=H\langle t \rangle \hspace{5mm} \mbox{ and } \hspace{5mm} S_0:=T\langle t \rangle,$$ so that plainly $S_0$ is a Sylow $p$-subgroup of $G_0$ and $\F_0:=\F_{S_0}(G_0)$ is a saturated fusion system on $S_0$. Note also that $O_{p^\prime}(G_0)=1$ as $O^p(G_0)=O^p(H)$ and $O_{p^\prime}(H)=1$ by assumption.

\smallskip

By Theorem~A, $Z(\F_0)=Z(G_0) \leq C_S(H)$. It thus suffices to prove $t\in Z(\F_0)$. As $t\in C_S(\E)\leq C_S(T)$, $t\in Z(S_0)$. Let $P$ be a subgroup of $S_0$ which is centric radical and fully normalized in $\F_0$. Then $t\in Z(S_0)\leq C_{S_0}(P)\leq P$. It is sufficient to prove $[t,\Aut_{\F_0}(P)]=1$. For as $P$ is arbitrary, Alperin's fusion theorem \cite[Theorem~3.6]{Aschbacher/Kessar/Oliver:2011a} implies then $t\in Z(\F_0)$. As $P$ is fully $\F_0$-normalized, $\Aut_{S_0}(P)\in\Syl_p(\Aut_{\F_0}(P))$ and thus $\Aut_{\F_0}(P)=\Aut_{S_0}(P)O^p(\Aut_{\F_0}(P))$. Note that $[t,\Aut_{S_0}(P)]=1$ as $t\in Z(S_0)$. Hence, it is enough to prove 
$$[t,O^p(\Aut_{\F_0}(P))]=1.$$ 
Let $\phi\in\Aut_{\F_0}(P)$ be a $p^\prime$-element. Since $O^p(H)=O^p(G_0)$, we have $O^p(\Aut_{\F_0}(P))=O^p(\Aut_H(P))$. In particular, $\phi\in\Aut_H(P)$ and thus $\phi|_{P\cap T}\in\Aut_H(P\cap T)=\Aut_{\E}(P\cap T)$. As $t\in P\leq S_0=T\langle t\rangle$, we have $P=(P\cap T) \langle  t \rangle$. Moreover, $t\in C_S(\E)$ implies that $\E \subseteq C_\F(\langle t\rangle)$. Hence, $\varphi|_{P\cap T}$ extends to $\psi \in \Aut_\F(P)$ with the property that $t\psi=t$. Note that $o(\psi)=o(\phi|_{P\cap T})$ and thus $\psi$ is a $p^\prime$-element as $\phi$ has order prime to $p$. Moreover, plainly $[P,\psi]\leq P\cap T$ and $\psi|_{P\cap T}=\phi|_{P\cap T}\in\Aut_H(P\cap T)$. Since $\E \unlhd \F_0$, $P \cap T$ is $\E$-centric by \cite[7.18]{Aschbacher:2011}. Now it follows from Lemma~1 that $\psi\in \Aut_H(P)$. Thus, $\chi:=\varphi \circ \psi^{-1} \in \Aut_H(P)\leq \Aut_{\F_0}(P)$. Clearly $\chi|_{P \cap T} = \Id$ as $\psi$ extends $\phi|_{P\cap T}$. Moreover, using that $H$ is normal in $G$, we obtain $[P,\chi] \leq [P,\Aut_H(P)]=[P,N_H(P)]\leq P\cap H=P \cap T$.  Hence, by \cite[Lemma~A.2]{Aschbacher/Kessar/Oliver:2011a}, $\chi\in C_{\Aut_{\F_0}(P)}(P/(P\cap T))\cap C_{\Aut_{\F_0}(P)}(P\cap T)=O_p(\Aut_{\F_0}(P))=\Inn(P)$ as $P$ is radical in $\F_0$. As $\Inn(P)\leq \Aut_{S_0}(P)$ and $[t,\Aut_{S_0}(P)]=1$, it follows $t\chi=t$. By the choice of $\psi$, also $t\psi=t$ and consequently $t\phi=t$. Since $\phi$ was chosen to be an arbitrary $p^\prime$-element in $\Aut_{\F_0}(P)$ and $O^p(\Aut_{\F_0}(P))$ is the subgroup generated by these elements, it follows that $[t,O^p(\Aut_{\F_0}(P))]=1$. As argued above, this yields the assertion.
\end{proof}

\bibliographystyle{amsplain}
\bibliography{repcoh}

\end{document}